\title{on uniform and coarse rigidity  of $L^p([0,1])$}
\author {Christian Rosendal}
\address{Department of Mathematics\\University of Maryland\\4176 Campus Drive - William E. Kirwan Hall\\College Park, MD 20742-4015\\USA}
\email{rosendal@umd.edu}
\urladdr{https://sites.google.com/view/christian-rosendal/}
\newcommand{\norm}[1]{\lVert#1\rVert}
\newcommand{\Norm}[1]{\big\lVert#1\big\rVert}
\newcommand{\forkindep}[1][]{\mathop{\mathop{\vcenter{\hbox{\oalign{\noalign{\kern-.3ex}
\hfil$\vert$\hfil\cr\noalign{\kern-.7ex}$\smile$\cr\noalign{\kern-.3ex}}}}}\displaylimits_{#1}}}
\newcommand{\maths}[1]{\[\begin{split}{#1}\end{split}\]}
\newcommand {\R}{\mathbb R}
\newcommand {\C}{\mathbb C}
\newcommand{\saa}{\Rightarrow}
\newcommand{\equi}{\Leftrightarrow}
\newcommand{\Lim}[1]{\mathop{\longrightarrow}\limits_{#1}}
\newcommand {\del}{ \; \big| \;}
\newcommand{\inv}{^{-1}}
\theoremstyle{plain}
\newtheorem{thm}{Theorem}
\newtheorem{cor}[thm]{Corollary}
\newtheorem{quest}[thm]{Question}
\theoremstyle{definition}
\definecolor{groen}{rgb}{0,0.5,.7}
\definecolor{gul}{rgb}{0.94,0.8,0}
\definecolor{blaa}{rgb}{0.16,0,0.6}
\definecolor{roed}{rgb}{1,0,0}
\begin{document}

\keywords{Uniform embeddings, Coarse geometry, Banach spaces}
\thanks{The research was partially supported by the NSF through the award DMS 2204849. The author is also very thankful for the detailed criticisms from Bruno Braga that greatly improved the presentation in the paper.}

\maketitle

\begin{abstract}
If $X$ is an almost transitive Banach space with amenable isometry group (for example, if $X=L^p([0,1])$ with $1\leqslant p<\infty$) and $X$ admits a uniformly continuous map $X\overset\phi\longrightarrow E$ into a Banach space $E$ satisfying
$$
\inf_{\norm{x-y}=r}\Norm{\phi(x)-\phi(y)}>0
$$
for some $r>0$ (that is, $\phi$ is {\em almost uncollapsed}), then $X$ admits a simultaneously uniform and coarse embedding into a Banach space $V$ that is finitely representable in $L^2(E)$. 
\end{abstract}


\

The aim of the present paper is to prove a rigidity result regarding a priori very weak notions of embeddings between Banach spaces assuming that the domain space satisfies additional analytical assumptions. The general motivating problem for our study is the following still unresolved question\footnote{The origin of the question is not quite clear, but the lacking understanding of the relationship between uniform and coarse embeddings of Banach spaces was already pointed out in N. J. Kalton's survey paper \cite{kalton2}.}.
\begin{quest}\label{quest}
Are the following two properties equivalent for all Banach spaces $X$ and $E$?
\begin{enumerate}
\item[(a)] $X$ uniformly embeds into $E$,
\item[(b)] $X$ coarsely embeds into $E$.
\end{enumerate}
\end{quest}
Let us recall that a map $X\overset\phi\longrightarrow E$ is a {\em uniform embedding} if, for all pairs of sequences $x_n,y_n\in X$, we have 
$$
\lim_n\norm{x_n-y_n}=0\;\;\equi\;\; \lim_n\norm{\phi(x_n)-\phi(y_n)}=0,
$$
whereas it is a {\em coarse embedding} if, for all $x_n,y_n\in X$, we have 
$$
\lim_n\norm{x_n-y_n}=\infty\;\;\equi\;\; \lim_n\norm{\phi(x_n)-\phi(y_n)}=\infty.
$$
As noted, Question \ref{quest} remains open in general,  but the strongest result to date is that (a) implies (b) provided that $E\oplus E$ is isomorphic to a closed subspace of $E$ \cite{Ros2} (see also \cite{Ros1,Braga1} for related results). In fact, in \cite{Ros2} a stronger result is obtained, which states that, with the same assumption on $E$, if $X\overset\phi\longrightarrow E$ is a uniformly continuous map satisfying just that
\begin{equation}\label{uncollapsed}
\inf_{\norm{x-y}\geqslant r}\Norm{\phi(x)-\phi(y)}>0
\end{equation}
for some $r> 0$, then $X$ admits a simultaneously coarse and uniform embedding into $E$. Maps satisfying (1) for some $r> 0$ are called {\em uncollapsed} in \cite{Ros1} and this paper also introduces  the even weaker property that 
\begin{equation}\label{almost uncollapsed}
\inf_{\norm{x-y}=r}\Norm{\phi(x)-\phi(y)}>0
\end{equation}
for some $r> 0$, which, in turn,  is termed {\em almost uncollapsed} in \cite{Braga2}. Observe that both coarse and uniform embeddings are uncollapsed and a fortiori almost uncollapsed. Note also that, for example, the exponential map 
$$
t\in \R\mapsto e^{it}\in \C
$$
is almost uncollapsed, but not uncollapsed.
Furthermore, building on work of Kalton \cite{kalton}, Corollary 11 of \cite{Ros1} states that there is no uniformly continuous almost uncollapsed map from $c_0$ into a reflexive Banach space. In \cite{Braga2}, B. Braga takes these issues  further and shows that, if $X\overset\phi\longrightarrow E$ is a uniformly continuous almost uncollapsed map and $E$ has nontrivial type, then $q_X\leqslant q_E$, where 
$$
q_X=\inf\{q\in [2,\infty[\del X \text{ has cotype }q\}
$$
and similarly for $E$ (see Theorem 1.3 \cite{Braga2}). Braga's theorem is a variation of earlier breakthrough results by M. Mendel and A. Naor (Theorem 1.9 and Theorem 1.11 \cite{mendel}) giving the same conclusion provided that $X$ embeds either uniformly or coarsely in $E$.

Our main result, Theorem \ref{intro:main}, instead aims more directly at Question \ref{quest} by producing a simultaneously uniform and coarse embedding of $X$ into a new space with local properties similar to those of $E$. For example, note that cotype is preserved under both finite representability and under the passage from $E$ to $L^2(E)$. However, this comes at the cost of imposing a significant restriction on $X$ that is satisfied, for example, by $X=L^p([0,1])$, $1\leqslant p<\infty$, and $G={\sf Isom}(X)$.

\begin{thm}\label{intro:main}
Suppose $X$ is a Banach space admitting  a strongly continuous linear isometric action $G\curvearrowright X$ by an amenable topological group  so that $G$ induces a dense orbit on the unit sphere ${\sf S}_X$. Assume also there is a uniformly continuous map $X\overset\phi\longrightarrow E$ into a Banach space $E$ so that 
$$
\inf_{\norm{x-y}=r}\Norm{\phi(x)-\phi(y)}>0
$$
for some $r>0$.
Then $X$ admits a simultaneously uniform and coarse embedding into a Banach space $V$ that is finitely representable in $L^2(E)$.
\end{thm}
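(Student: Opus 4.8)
\emph{Proof sketch.} First I would record that, being uniformly continuous between Banach spaces, $\phi$ is Lipschitz at large distances, so its upper modulus $\omega(t):=\sup_{\norm{a-b}\leqslant t}\norm{\phi(a)-\phi(b)}$ obeys $\omega(t)\to 0$ as $t\to 0^{+}$ and $\omega(t)\leqslant At+B$, while for the lower modulus $\rho(t):=\inf_{\norm{a-b}=t}\norm{\phi(a)-\phi(b)}$ we have $\rho(r)>0$ and, by the triangle inequality applied at a point of the segment $[a,b]$, $\rho(t)\geqslant\rho(r)-\omega(\lvert t-r\rvert)$, so $\rho\geqslant\rho(r)/2$ near $r$. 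For $\lambda>0$ set $\phi_{\lambda}(x):=\phi(\lambda x)$, with upper modulus $\omega(\lambda\,\cdot\,)$, lower modulus $\rho(\lambda\,\cdot\,)$, and "useful scale" $r/\lambda$. Let $\Gamma:=X\rtimes G$ act on $X$ by the affine isometries $z\mapsto gz+v$; since $X$ is abelian (hence amenable) and $G$ amenable, $\Gamma$ is an amenable topological group acting continuously on $X$, and I fix a right-invariant mean $\mathfrak{m}$ on its bounded right-uniformly continuous functions.

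For each $\lambda$ and $x,y\in X$ the map $\gamma\mapsto\norm{\phi_{\lambda}(\gamma x)-\phi_{\lambda}(\gamma y)}^{2}$ is continuous and bounded by $\omega(\lambda\norm{x-y})^{2}$, so
\[
K_{\lambda}(x,y):=\mathfrak{m}_{\gamma}\!\left[\,\norm{\phi_{\lambda}(\gamma x)-\phi_{\lambda}(\gamma y)}^{2}\,\right]
\]
is well defined, uniformly continuous in $(x,y)$, and $\Gamma$-invariant (by right-invariance of $\mathfrak{m}$), and it satisfies $\rho(\lambda t)^{2}\leqslant K_{\lambda}(x,y)\leqslant\omega(\lambda t)^{2}$ whenever $\norm{x-y}=t$ (as $\gamma$ is an isometry, $\gamma x$ and $\gamma y$ differ by a vector of norm $\lambda t$), while $\sqrt{K_{\lambda}}$ obeys the triangle inequality by Minkowski's inequality in $L^{2}(\mathfrak{m})$. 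Now $\Gamma$-invariance forces $K_{\lambda}(x,y)=K_{\lambda}(g(x-y),0)$ for all $g\in G$, and, some $G$-orbit being dense in ${\sf S}_{X}$, continuity of $K_{\lambda}$ yields that $K_{\lambda}(x,y)$ depends only on $\norm{x-y}$: write $K_{\lambda}(x,y)=f_{\lambda}(\norm{x-y})^{2}$. Then $f_{\lambda}$ is continuous, subadditive, vanishes at $0$, satisfies $\rho(\lambda t)\leqslant f_{\lambda}(t)\leqslant\omega(\lambda t)$ and $f_{\lambda}(r/\lambda)\geqslant\rho(r)$; and — the key gain from homogenising — subadditivity together with $f_{\lambda}(r/\lambda)\geqslant\rho(r)$ forces the \emph{linear} lower bound $f_{\lambda}(s)\geqslant c\,\lambda s$ for $\lambda s\leqslant t_{0}$, where $c=\rho(r)/2r$ and $\omega(t_{0})\leqslant\rho(r)/2$, in sharp contrast with the a priori arbitrarily slow decay of $\omega$.

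Next I would record that each $f_{\lambda}$ is "locally $L^{2}(E)$". By the standard characterisation of amenability, $\mathfrak{m}$ is a weak-$*$ limit of finitely supported probability measures $\sum_{i}c_{i}\delta_{\gamma_{i}}$; for each of these the tuple $\bigl(\sqrt{c_{i}}\,\phi_{\lambda}(\gamma_{i}x)\bigr)_{i}\in\ell^{2}(E)$ reproduces, to within any prescribed $\epsilon$, the pseudometric $(x,y)\mapsto f_{\lambda}(\norm{x-y})$ on any prescribed finite subset of $X$; since $\ell^{2}(E)\hookrightarrow L^{2}(E)$ isometrically and finite $\ell^{2}$-sums of $L^{2}(E)$ are again $L^{2}(E)$, it follows that $f_{\lambda}(\norm{\,\cdot\,-\,\cdot\,})$, and indeed any fixed finite $\ell^{2}$-combination of the $f_{\lambda}$, is finitely representable \emph{as a metric space} in $L^{2}(E)$. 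The remaining task — and the crux — is a \emph{multi-scale synthesis}: to produce a single radial kernel $(x,y)\mapsto F(\norm{x-y})$, still finitely representable in $L^{2}(E)$, with $F$ \emph{admissible}, i.e. $F(t)\to 0\Leftrightarrow t\to 0$ and $F(t)\to\infty\Leftrightarrow t\to\infty$. The natural candidate is $F(t)^{2}=\int_{0}^{\infty}f_{\lambda}(t)^{2}\,w(\lambda)\,\tfrac{d\lambda}{\lambda}$ for a weight $w$ decaying rapidly at $\infty$: the linear lower bound gives $F(t)\gtrsim t$ near $0$, while activating the almost-uncollapsed hypothesis at scales $\lambda\approx r/t$ — where $f_{\lambda}(t)\geqslant\rho(r)/2$ on a multiplicative window of constant $\tfrac{d\lambda}{\lambda}$-mass — makes $F$ bounded away from $0$ on each $[\epsilon,\infty)$ and, if $w$ is permitted to blow up at $0$, forces $F(t)\to\infty$. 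The obstruction is that $f_{\lambda}(t)\leqslant\omega(\lambda t)$ is the only available \emph{upper} bound and $\omega$ may decay so slowly that this integral diverges near $\lambda=0$. I expect this to be circumvented by a preliminary \emph{regularisation} of $\phi$: replacing it by a convolution-type average $\mathfrak{m}_{\gamma\in N}\bigl[\phi(\gamma\,\cdot\,)\bigr]$ over a neighbourhood $N$ of the identity in $\Gamma$ (equivalently, an average of translates of $\phi$ by a sufficiently spread-out probability measure on $X$), which is a bounded perturbation of $\phi$, takes values in an $L^{2}(E)$-type space, remains almost uncollapsed near some scale, and acquires a H\"older small-scale modulus; it is precisely here that the structural hypotheses on $X$ are used. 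After such a reduction $\omega(t)\leqslant Ct^{\alpha}$ for $t\leqslant 1$, $F$ is finite for $w(\lambda)=\lambda^{-\beta}$ with $0<\beta<2\alpha$ near $0$, and $F$ is admissible.

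Finally one linearises: taking an ultraproduct, along an ultrafilter on the finite subsets of $X$, of the near-isometric maps into the spaces $\ell^{2}(E)\cong L^{2}(E)$ furnished by the previous step, one obtains a Banach space $V$ — an ultraproduct of $L^{2}(E)$'s, hence finitely representable in $L^{2}(E)$ — together with a map $\iota\colon X\to V$ satisfying $\norm{\iota(x)-\iota(y)}=F(\norm{x-y})$; admissibility of $F$ then says exactly that $\iota$ is simultaneously a uniform and a coarse embedding. I expect the multi-scale synthesis of the preceding paragraph — extracting from a \emph{single-scale}, possibly badly-modulated uniformly continuous map usable information at \emph{all} scales — to be the heart of the argument: amenability is what turns the one-scale hypothesis into the subadditive radial functions $f_{\lambda}$ with their linear lower bounds near $0$, and the near-transitivity of $G\curvearrowright{\sf S}_{X}$ is what legitimises the passage to radial kernels throughout.
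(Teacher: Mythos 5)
Your overall architecture is genuinely close to the paper's: the paper also passes to the amenable group $X\rtimes G$, averages over it (via the Schneider--Thom F\o lner theorem rather than an invariant mean, so as to obtain an honest cocycle $b$ into a space finitely representable in $L^2(E)$), and uses the dense $G$-orbit on ${\sf S}_X$ to conclude that $\norm{b(\tau_x)}$ depends only on $\norm x$. Your radial subadditive functions $f_\lambda$, their linear lower bound near $0$, and the ultraproduct linearisation are all sound and correspond to real steps in, or behind, the paper's argument.

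The gap is exactly where you flag ``the crux'': the multi-scale synthesis. Your integral $F(t)^2=\int_0^\infty f_\lambda(t)^2w(\lambda)\,\frac{d\lambda}{\lambda}$ needs $w(\lambda)\to\infty$ as $\lambda\to0^+$ to force $F(t)\to\infty$, while convergence requires $\int_0^1\omega(\lambda t)^2w(\lambda)\frac{d\lambda}{\lambda}<\infty$; for $\omega(u)\sim\log(1/u)^{-1/2}$, say, these are incompatible, as you note. The proposed repair --- regularising $\phi$ by averaging over translates to gain a H\"older modulus --- is not justified and fails as stated: averaging $\phi(\cdot+v)$ against any probability measure on $X$ still only yields the upper modulus $\omega$, and in an infinite-dimensional Banach space there is no translation-quasi-invariant measure whose translates move Lipschitz-continuously in total variation, so no convolution argument improves the modulus; the hypotheses on $G$ (which acts linearly, preserving $\norm{x-y}$) do not help here either. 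The paper avoids the issue by two moves absent from your sketch. First (Proposition 7 of \cite{Ros1}), instead of demanding expansion at all large scales it builds a single uniformly continuous $\psi\colon X\to\ell^2(E)$ that is merely \emph{solvent}: expanding by $\geqslant n$ only on thin annuli $R_n\leqslant\norm{x-y}\leqslant R_n+n$, where the $R_n$ are chosen adaptively, growing as fast as the slowness of $\omega$ demands --- in effect a lacunary, discrete version of your integral in which the weight placed at scale $r/R_n$ is allowed to depend on how far out $R_n$ has been pushed. Second, the lower bound is propagated from these annuli to all of $\{\norm{x-y}\geqslant R_n\}$ by a geometric trick: since $\dim X\geqslant2$, any $y$ with $\norm y=R_n$ can be written as $z+u$ with $\norm z=\norm u=\alpha$ for any prescribed $\alpha\geqslant R_n$, whence radiality plus the triangle inequality give $n\leqslant F(R_n)\leqslant 2F(\alpha)$. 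You in fact possess everything needed for this second step (your $\sqrt{K_\lambda}$ are radial pseudometrics), but you never deploy it; with it, the lacunary sum suffices and no H\"older regularity is required.
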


\begin{proof}Without loss of generality, we may assume that $\dim X\geqslant 2$.
By Proposition 7 of \cite{Ros1}, from $\phi$ we can construct another uniformly continuous map $X\overset\psi\longrightarrow  \ell^2(E)$, which, moreover, is {\em solvent}, i.e., so that, for some sequence $R_1, R_2, \ldots$ of constants and all $x,y\in X$, we have 
\begin{equation}\label{solvency}
R_n\leqslant \norm{x-y}\leqslant R_n+n\;\;\saa \;\;\norm{\psi(x)-\psi(y)}\geqslant n.
\end{equation}

Consider now the semidirect product group $X\rtimes G$ arising from the linear action of $G$ on $X$. For clarity of notation, elements $x\in X$ are denoted by $\tau_x$ and the group operation is written multiplicatively.  Thus every element of $X\rtimes G$ can be written uniquely in the form $\tau_x g$ for some $x\in X$ and $g\in G$. Moreover, for all $x,y\in X$ and $g\in G$, we have
$$
\tau_x\tau_y=\tau_{x+y} \quad\text{ and }\quad   g\tau_x =\tau_{g(x)}g.
$$
Define a pseudometric  $d$ on $X\rtimes G$ by $d(\tau_xg,\tau_yf)=\norm{x-y}$ and note that $d$ is left-invariant. Indeed, for all $x,y,z\in X$ and $g,f,h\in G$, we have
\[\begin{split}
d(\tau_zh\cdot \tau_xg, \tau_zh\cdot\tau_yf)
&=d(\tau_{z+h(x)}hg, \tau_{z+h(y)}hf)\\
&=\Norm{(z+h(x))-(z+h(y))}\\
&=\norm{h(x)-h(y)}\\
&=\norm{x-y}\\
&=d(\tau_xg, \tau_yf),
\end{split}\]
showing left-invariance. Note also that, if $X\rtimes G$ is given the topology induced by the identification with the cartesian product $X\times G$ via $(x,g)\mapsto \tau_xg$, then $d$ is continuous. In other words, $d$ is a continuous left-invariant pseudometric on the topological semidirect product $X\rtimes G$.

Define now a map $X\rtimes G\overset\Psi\longrightarrow \ell^2(E)$ by setting $\Psi(\tau_xg)=\psi(x)$ and note that $\Psi$ is uniformly continuous with respect to the pseudometric $d$. For $t>0$, let also
$$
\eta(t)=\inf_{d(\tau_xg,\tau_yf)=t}\norm{\Psi(\tau_xg)-\Psi(\tau_yf)}=\inf_{\norm{x-y}=t}\norm{\psi(x)-\psi(y)}
$$
and 
$$
\mu(t)=\sup_{d(\tau_xg,\tau_yf)\leqslant t}\norm{\Psi(\tau_xg)-\Psi(\tau_yf)}=\sup_{\norm{x-y}\leqslant t}\norm{\psi(x)-\psi(y)}.
$$

As $X$ is abelian and $G$ amenable, also $X\rtimes G$ is amenable (see, for example,  Theorem G.2.1 and Proposition G.2.2.(ii) in \cite{bekka}). So, by Theorem 6.1 of \cite{thom}, which improves Theorem 16 of \cite{Ros1}, there is a Banach space $V$, that is finitely representable in $L^2(\ell^2(E))$ and thus also in $L^2(E)$, and a continuous linear isometric action $X\rtimes G\overset\pi\curvearrowright V$ with an associated continuous cocycle $X\rtimes G\overset b\longrightarrow V$ satisfying the bounds
\begin{equation}\label{one}
\eta\big(d(\tau_xg,\tau_yf)\big)
\leqslant \Norm{b(\tau_xg)-b( \tau_yf)}\leqslant \mu\big(d(\tau_xg,\tau_yf)\big),
\end{equation}
that is,
\maths{
\eta\big(\norm{x-y}\big)
&\leqslant \Norm{b(\tau_xg)-b( \tau_yf)}\leqslant \mu\big(\norm{x-y}\big)
}
for all $x,y\in X$ and $g,f\in G$. Here, that $b$ is a cocycle simply means that it satisfies the equation
\begin{equation}\label{two}
b(\sigma\gamma)=\pi(\sigma)b(\gamma)+b(\sigma)
\end{equation}
for all $\sigma, \gamma\in X\rtimes G$.

Observe that, by the solvency condition  (\ref{solvency}) on $\psi$, we have 
$\eta(t)\geqslant n$ whenever $R_n\leqslant t\leqslant R_n+n$. It thus follows that
$$
R_n\leqslant \norm{x-y}\leqslant R_n+n\;\saa\;  \Norm{b(\tau_xg)-b( \tau_yf)}\geqslant n
$$
for all $n\geqslant 1$, $x,y\in X$ and $g,f\in G$.

Note also that, by the cocycle equation (\ref{two}), we have for $\sigma, \gamma\in X\rtimes G$ that 
$$
b(\gamma)=b(\sigma\sigma\inv\gamma)=\pi(\sigma)b(\sigma\inv \gamma)+b(\sigma),
$$
whereby
$$
\norm{b(\gamma)-b(\sigma)}=\norm{\pi(\sigma)b(\sigma\inv \gamma)}=\norm{b(\sigma\inv \gamma)}.
$$
Observe also that, because $\tau_0=1=\tau_01$ is the identity in $X\rtimes G$, we have $b(\tau_01)=0$ and therefore, for all $g\in G$, 
$$
\norm{b(g)}=\norm{b(\tau_0g)-b( \tau_01)}\leqslant \mu\big(\norm{0-0}\big)=0,
$$
that is $b(g)=0$.

Suppose now that $x$ and $y$ are two elements of $X$ with $\norm{x}=\norm y$. Then, because some and hence every $G$-orbit on ${\sf S}_X$ is dense, there are $g_n\in G$ so that $g_n(x)\Lim{n\to \infty}y$. It thus follows from continuity of $b$ and the cocycle equation that
\[\begin{split}
\Norm{b(\tau_y)}
&=\lim_{n\to \infty}\Norm{b(\tau_{g_n(x)})}\\
&=\lim_{n\to \infty}\Norm{b(g_n\tau_{x}g_n\inv)}\\
&=\lim_{n\to \infty}\Norm{\pi(g_n\tau_{x})b(g_n\inv)+\pi(g_n)b(\tau_x)+b(g_n)}\\
&=\lim_{n\to \infty}\Norm{\pi(g_n\tau_{x})0+\pi(g_n)b(\tau_x)+0}\\
&=\lim_{n\to \infty}\Norm{b(\tau_x)}\\
&=\Norm{b(\tau_x)}.
\end{split}\]
In other words, $\Norm{b(\tau_x)}=\Norm{b(\tau_y)}$ whenever $\norm x=\norm y$.

Let $x\in X$ be any element with $\alpha=\norm x\geqslant R_n$. We claim that $\norm{b(\tau_x)}\geqslant \frac n2$. To see this, pick any $y\in X$ with $\norm y=R_n$. Because $\dim X\geqslant 2$,  $y+\alpha {\sf S}_X$ is a path connected set containing both points of norm $\leqslant \alpha$ and points of norm $\geqslant \alpha$.  Therefore, $y+\alpha {\sf S}_X$ must intersect $\alpha {\sf S}_X$ and thus $y=z+u$ for some $z,u$ with $\norm z=\norm u=\alpha=\norm x$. Because $R_n\leqslant \norm{y-0}\leqslant R_n+n$, it follows that 
\[\begin{split}
n
&\leqslant \norm{b(\tau_y)-b(\tau_0)}\\
&=\norm{b(\tau_y)}\\
&=\norm{b(\tau_z\tau_u)}\\
&=\norm{\pi(\tau_z)b(\tau_u)+b(\tau_z)}\\
&\leqslant\norm{\pi(\tau_z)b(\tau_u)}+\norm{b(\tau_z)}\\
&=\norm{b(\tau_u)}+\norm{b(\tau_z)}\\
&=2\norm{b(\tau_x)},
\end{split}\]
and so $\norm{b(\tau_x)}\geqslant \frac n2$, as claimed.

From this it follows that, for all $n\geqslant 1$ and $x,y\in X$,
$$
\norm{x-y}\geqslant R_n\;\saa\;  \norm{b(\tau_x)-b(\tau_y)}=\norm{b(\tau_{y}\inv\tau_x)}=\norm{b(\tau_{x-y})}\geqslant \frac n2.
$$
Thus, if we restrict $X\rtimes G\overset b\longrightarrow V$ to the factor $X$, we obtain a continuous cocycle $X\overset b\longrightarrow V$ for the action $X\overset \pi\curvearrowright V$ satisfying also
$$
\lim_n\norm{x_n-y_n}=\infty\;\;\saa\;\; \lim_n\Norm{b(x_n)-b(y_n)}=\infty.
$$
Applying Proposition 1 \cite{Ros1}, we see that $X\overset b\longrightarrow V$ is uniformly continuous and that there are constants $c,C>0$ so that, for all $x,y\in X$,
$$
c \cdot \min\{   \norm{x-y}, 1  \}
\leqslant  
\norm{b(x)-b(y)}
\leqslant C\norm{x-y}+C.
$$
From this it follows that $b$ is simultaneously a uniform and coarse embedding of $X$ into $V$.  
\end{proof}

For $1\leqslant p<\infty$, the Banach space $L^p([0,1])$ is {\em almost transitive} (Theorem 9.6.3 and Theorem 9.6.4 \cite{rolewicz}), that is, has a dense orbit on the unit sphere under the action of its linear isometry group ${\sf Isom}\big(L^p([0,1])\big)$. Furthermore, ${\sf Isom}\big(L^p([0,1])\big)$ is amenable when equipped with the strong operator topology (in fact, by Theorem 6.6 of \cite{pestov}, it is extremely amenable).  The following corollary is therefore immediate.

\begin{cor}
Let $1\leqslant p<\infty$ and suppose that there is a  uniformly continuous map $L^p([0,1])\overset\phi\longrightarrow E$ into a Banach space so that 
$$
\inf_{\norm{x-y}=r}\Norm{\phi(x)-\phi(y)}>0
$$
for some $r>0$.
Then $L^p([0,1])$ admits a simultaneously uniform and coarse embedding into a Banach space $V$ that is finitely representable in $L^2(E)$.
\end{cor}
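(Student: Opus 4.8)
The plan is simply to specialize Theorem~\ref{intro:main}. Take $X=L^p([0,1])$ and let $G={\sf Isom}\big(L^p([0,1])\big)$ act on $X$ in the tautological way, equipped with the strong operator topology; the map $\phi$ is handed to us by hypothesis. So the entire task is to check that this pair $(X,G)$ satisfies the standing assumptions of the theorem: (i) the action is linear, isometric and strongly continuous; (ii) $G$ is an amenable topological group; and (iii) some $G$-orbit on the unit sphere ${\sf S}_X$ is dense.

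Point (i) is essentially formal. Linearity and isometry are immediate from the choice of $G$, and strong continuity is built into the strong operator topology: if $g_n\to g$ in ${\sf Isom}(X)$ and $x_n\to x$ in $X$, then $\norm{g_n(x_n)-g(x)}\leqslant\norm{x_n-x}+\norm{g_n(x)-g(x)}\to 0$, so the action map $G\times X\to X$ is jointly continuous. Point (iii) is the classical almost transitivity of $L^p([0,1])$: combining measure-preserving rearrangements of $[0,1]$ with multiplication operators by suitable densities already produces a dense orbit on ${\sf S}_X$, and this is recorded as Theorems~9.6.3 and~9.6.4 of \cite{rolewicz}. Point (ii) is the one genuinely nontrivial input: by a theorem of Pestov (Theorem~6.6 of \cite{pestov}), ${\sf Isom}\big(L^p([0,1])\big)$ with the strong operator topology is in fact extremely amenable, hence a fortiori amenable.

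With (i)--(iii) verified, Theorem~\ref{intro:main} applies word for word and delivers a Banach space $V$ that is finitely representable in $L^2(E)$ together with a map $L^p([0,1])\to V$ that is simultaneously a uniform and a coarse embedding, which is exactly the assertion. There is no real obstacle here beyond correctly locating the two structural facts about $L^p([0,1])$ in the literature: all of the substantive work --- producing a solvent map into $\ell^2(E)$, forming the semidirect product $X\rtimes G$, invoking the cocycle existence theorem of \cite{thom} over the amenable group $X\rtimes G$, and extracting an embedding from the restriction of the resulting cocycle to the subgroup $X$ --- is already carried out in the proof of Theorem~\ref{intro:main}. The only point that deserves a moment's thought is the choice of topology on $G$, which must simultaneously be weak enough (the strong operator topology) for amenability to survive and strong enough for the action to remain strongly continuous; as the computation in (i) shows, the strong operator topology does both.
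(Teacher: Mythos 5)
Your proposal is correct and follows exactly the paper's own route: the corollary is obtained by specializing Theorem \ref{intro:main} to $X=L^p([0,1])$ with $G={\sf Isom}\big(L^p([0,1])\big)$ in the strong operator topology, citing Rolewicz for almost transitivity and Giordano--Pestov for (extreme) amenability. Your additional check of joint continuity of the action is a harmless elaboration of what the paper treats as immediate.
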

This result also supports the view that uniform continuity is the most significant requirement with regards to uniform embeddings, whereas uniform continuity of the inverse is easier to obtain. In this connection, A. Naor \cite{Naor} gives an example of a bornologous map between two separable Banach spaces that is not close to any uniformly continuous map\footnote{A map $M\overset\phi\longrightarrow N$ between two metric spaces is said to be {\em bornologous} provided that $\sup_{d(x,y)\leqslant t}d(\phi(x),\phi(y))<\infty$ for all $t<\infty$. Moreover, two maps $M\overset{\phi,\psi}\longrightarrow N$ are {\em close} provided that $\sup_xd(\phi(x),\psi(x))<\infty$.}.

\end{document}